\newtheorem{theorem}{Theorem}[section]
\newtheorem{proposition}[theorem]{Proposition}
\newtheorem{corollary}[theorem]{Corollary}
\theoremstyle{remark}
\newtheorem{remark}[theorem]{Remark}
\newtheorem{example}[theorem]{Example}
\numberwithin{equation}{theorem}
\def\Proj{\operatorname{Proj}}
\def\Cl{\operatorname{Cl}}
\def\div{\operatorname{div}}
\def\Div{\operatorname{Div}}
\def\gcd{\operatorname{gcd}}
\def\lcm{\operatorname{lcm}}
\def\coker{\operatorname{coker}}
\def\image{\operatorname{image}}
\def\to{\longrightarrow}
\def\ge{\geqslant}
\def\le{\leqslant}
\def\lf{\lfloor}
\def\rf{\rfloor}
\def\bx{\mathbf x}
\def\frakm{\mathfrak m}
\def\frakp{\mathfrak p}
\def\CC{\mathbb C}
\def\NN{\mathbb N}
\def\PP{\mathbb P}
\def\QQ{\mathbb Q}
\def\ZZ{\mathbb Z}
\def\calO{\mathcal O}
\begin{document}
\title{Divisor class groups of graded hypersurfaces}

\author{Anurag K. Singh}
\address{Department of Mathematics, University of Utah, 155 South 1400 East, Salt Lake City, UT~84112, USA} \email{singh@math.utah.edu}
\author{Sandra Spiroff}
\address{Mathematics Department, Seattle University, 901 12th Ave, Seattle, WA~98122, USA} \email{spiroffs@seattleu.edu}

\thanks{The first author was supported by the NSF under grants DMS~0300600 and
DMS~0600819.}

\subjclass[2000]{Primary 13C20, Secondary 14C20.}

\begin{abstract}
We demonstrate how some classical computations of divisor class groups can be obtained using the theory of rational coefficient Weil divisors and related results of Watanabe.
\end{abstract}
\maketitle

\section{Introduction}

The purpose of this note is to provide a simple technique to compute divisor class groups of affine normal hypersurfaces of the form 
\[
k[z,x_1,\dots,x_d]/(z^n-g)\,,
\]
where $g$ is a weighted homogeneous polynomial in $x_1,\dots,x_d$ of degree relatively prime to $n$. We use the theory of rational coefficient Weil divisors due to Demazure \cite{Demazure} and related results of Watanabe \cite{Watanabe}. This provides an alternative approach to various classical examples found in Samuel's influential lecture notes \cite{Samuel}, as well as to computations due to Lang \cite{LangTams} and Scheja and Storch \cite{SchejaStorch}. While the computations we present here are subsumed by those of \cite{SchejaStorch}, our techniques are different. A key point in our approach is that the projective variety defined by a hypersurface as above is weighted projective space over $k$, and this makes for straightforward, elementary calculations.

Watanabe \cite[page~206]{Watanabe} pointed out that $\QQ$-divisor techniques can be used to recover the classification of graded factorial domains of dimension two, originally due to Mori \cite{Mori}. Robbiano has applied similar methods to a study of factorial and almost factorial schemes in weighted projective space \cite{Robbiano}.

\section{$\QQ$-divisors}

We review some material from \cite{Demazure} and \cite{Watanabe}. Let $k$ be a field, and let $X$ be a normal irreducible projective variety over $k$, with rational function field $k(X)$.

A \emph{rational coefficient Weil divisor} or a \emph{$\QQ$-divisor} on $X$ is a $\QQ$-linear combination of irreducible subvarieties of $X$ of codimension one. Let $D=\sum n_iV_i$ be a $\QQ$-divisor, where $V_i$ are distinct. Then $\lf D\rf$ is defined as
\[
\lf D\rf =\sum \lf n_i\rf V_i\,,
\]
where $\lf n\rf$ denotes the greatest integer less than or equal to $n$. We set
\[
\calO_X(D)=\calO_X(\lf D\rf)\,.
\]
If each coefficient $n_i$ occurring in $D$ is nonnegative, we say that $D\ge 0$.

A $\QQ$-divisor $D$ is \emph{ample} if $nD$ is an ample Cartier divisor for some $n\in\NN$. In this case, the \emph{generalized section ring} corresponding to $D$ is the ring
\[
R(X,D)=\oplus_{j\ge0}H^0(X,\calO_X(jD))\,.
\]
If $R=R(X,D)$, then the $n$-th Veronese subring of $R=R(X,D)$ is the ring
\[
R^{(n)}=\oplus_{j\ge0}H^0(X,\calO_X(jnD))=R(X,nD)\,.
\]
The following theorem, due to Demazure, implies that a normal $\NN$-graded ring $R$ is determined by a $\QQ$-divisor on $\Proj R$.

\begin{theorem}\label{thm:Demazure} \cite[3.5]{Demazure}.
Let $R$ be an $\NN$-graded normal domain, finitely generated over a field $R_0$. Let $T$ be a homogeneous element of degree $1$ in the fraction field of $R$. Then there exists a unique ample $\QQ$-divisor $D$ on $X=\Proj R$ such that
\[
R=\oplus_{j\ge0}H^0(X,\calO_X(jD))T^j\,.
\]
\end{theorem}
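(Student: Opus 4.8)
The plan is to build the divisor $D$ directly from the divisorial valuations carried by the normal domain $R$, and then to read off its graded pieces through the normality identity $R=\bigcap_\frakp R_\frakp$. First I would analyze the graded fraction field $K=\operatorname{Frac}(R)$. Since $T$ has degree $1$ and is transcendental over the degree-zero subfield $K_0=k(X)$, one has $K=k(X)[T,T^{-1}]$, so every homogeneous element of degree $j$ is uniquely of the form $fT^j$ with $f\in k(X)$. Thus $R_jT^{-j}$ is an unambiguous $k$-subspace of $k(X)$, and the theorem reduces to describing exactly which $f\in k(X)$ satisfy $fT^j\in R_j$.

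Next I would use that, as $R$ is normal, $R=\bigcap_\frakp R_\frakp$ where $\frakp$ runs over the height-one homogeneous primes, each $R_\frakp$ being a DVR with a grading-compatible valuation $v_\frakp$ on $K$. Taking $R_0=k$ and $\dim R\ge 2$ (the case of interest), the only homogeneous prime containing $R_+$ is $R_+$ itself, of height $\dim R\ge 2$; hence the height-one homogeneous primes correspond bijectively to the prime divisors $V$ on $X=\Proj R$. Writing $v_\frakp|_{k(X)}=e_V\operatorname{ord}_V$ with $e_V\in\ZZ_{>0}$ and $\operatorname{ord}_V$ the normalized valuation of $V$, I would set $n_V=v_\frakp(T)/e_V$ and $D=\sum_V n_VV$; finiteness of this sum holds because $v_\frakp(T)=0$ for all but the finitely many $V$ in the support of $T$ as a rational section. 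The membership condition then unwinds: $fT^j\in R$ iff $v_\frakp(fT^j)=e_V\operatorname{ord}_V(f)+j\,v_\frakp(T)\ge 0$ for all $V$, i.e. $\operatorname{ord}_V(f)\ge -j n_V$. Since $\operatorname{ord}_V(f)\in\ZZ$, this is equivalent to $\operatorname{ord}_V(f)\ge-\lf j n_V\rf$, which is precisely the defining condition for $f\in H^0(X,\calO_X(\lf jD\rf))$. This yields $R=\oplus_{j\ge0}H^0(X,\calO_X(jD))T^j$.

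It remains to verify ampleness and uniqueness. For ampleness I would invoke finite generation of $R$: there is an integer $n$ for which every $n\,n_V$ is an integer and the Veronese $R^{(n)}$ is generated in degree one over $R_0$. For such $n$, one has $\lf nD\rf=nD$, and $\calO_X(nD)$ is the ample invertible sheaf $\calO_X(1)$ attached to the standard-graded ring $R^{(n)}$ by Serre's theorem; hence $nD$ is an ample Cartier divisor and $D$ is an ample $\QQ$-divisor, consistent with the identity $R(X,nD)=R^{(n)}$ recorded before the statement. For uniqueness, if $D'$ also satisfies the conclusion then $H^0(X,\calO_X(\lf jD\rf))=R_jT^{-j}=H^0(X,\calO_X(\lf jD'\rf))$ for all $j\ge0$; using ampleness to realize the extremal valuations, each coefficient is recovered from the graded pieces as $n_V=\lim_j \lf j n_V\rf/j$, so $n_V=n_V'$ for every $V$ and $D=D'$.

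I expect the ampleness verification to be the main obstacle. The valuation-theoretic identification of the graded pieces is essentially bookkeeping once the grading of $K$ and the normalization factors $e_V$ are in hand, but passing from "a Weil divisor whose sheaf has the right global sections'' to "an ample $\QQ$-divisor'' genuinely requires finite generation together with the existence of a standard-graded Veronese, and care is needed to confirm that the rational coefficients $n_V$ produce the correct floors $\lf j n_V\rf$ in every degree.
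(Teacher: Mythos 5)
The paper offers no proof of Theorem~\ref{thm:Demazure}: it is quoted verbatim from Demazure \cite{Demazure}, so the only comparison available is with the standard argument, and your proposal is essentially that argument (it is also the one reproduced in Watanabe's paper). Your definition $n_V=v_\frakp(T)/e_V$, the finiteness of the sum, the equivalence $\operatorname{ord}_V(f)\ge -jn_V\iff\operatorname{ord}_V(f)\ge-\lf jn_V\rf$, and the resulting identification of the graded pieces are all correct; a sanity check on $R=k[x,y]$, $T=x$ gives $D=$ the point at infinity on $\PP^1$, as it should. Two steps need more care than you give them. First, normality yields $R=\bigcap_\frakp R_\frakp$ over \emph{all} height-one primes, not only the homogeneous ones; to discard the non-homogeneous ones you need the observation that such a prime $\frakp$ contains no nonzero homogeneous element (its homogeneous part $\frakp^*$ is a prime of strictly smaller height, hence zero), so the condition $v_\frakp(fT^j)\ge0$ is automatic for homogeneous elements of the fraction field. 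Second, in the ampleness step it is not enough that $\lf nD\rf=nD$ is an integral Weil divisor: on a normal but non-factorial $X$ it need not be Cartier, so you must actually identify the reflexive sheaf $\calO_X(nD)$ with $\calO_{\Proj R^{(n)}}(1)$ for a Veronese $R^{(n)}$ generated in degree one (both are reflexive and agree in codimension one, and the latter is invertible and very ample); you flag this as the main obstacle, correctly. Your uniqueness argument also deserves one more line: for $j$ a large multiple of the common denominator, $\lf jD\rf=jD$ is very ample, hence base-point free, so each coefficient of $jD$ is recovered as $-\min\{\operatorname{ord}_V(f):f\in R_jT^{-j}\}$, which is what justifies ``realizing the extremal valuations.'' With these three points supplied, the proof is complete and agrees with the source the paper cites.
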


We next recall a result of Watanabe, which expresses the divisor class group of $R$ in terms of the divisor class group of $X$ and a $\QQ$-divisor corresponding to $R$.

\begin{theorem}\label{exactseq} \cite[Theorem~1.6]{Watanabe}
Let $X$ be a normal irreducible projective variety over a field. Assume $\dim X\ge 1$ and let $D=\sum_{i=1}^r(p_i/q_i)V_i$ be a $\QQ$-divisor on $X$ where $V_i$ are distinct irreducible subvarieties, $p_i,q_i\in\ZZ$ are relatively prime, and $q_i>0$. Set
\[
R=\oplus_{j\ge0}H^0(X,\calO_X(jD))T^j\,.
\]
Then there is an exact sequence
\[\CD
0@>>>\ZZ@>{\theta}>>\Cl(X)@>>>\Cl(R)@>>>\coker\alpha@>>>0\,,
\endCD\]
where $\theta(1)=\lcm(q_i)\cdot D$, and $\alpha\colon\ZZ\to\oplus_{i=1}^r\ZZ/q_i\ZZ$ is the map $1\mapsto (p_i\mod q_i)_i$.
\end{theorem}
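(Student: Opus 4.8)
The plan is to reduce the computation of $\Cl(R)$ to an explicit description in terms of the prime divisors of $X$, and then to compare the groups of Weil divisors on $X$ and on $R$ through a single homomorphism. Write $q=\lcm(q_i)$, and for a prime divisor $W$ on $X$ let $c_W$ be its coefficient in $D$ (so $c_W=p_i/q_i$ if $W=V_i$ and $c_W=0$ otherwise) and let $q_W$ be the denominator of $c_W$ in lowest terms (so $q_W=q_i$ for $W=V_i$ and $q_W=1$ otherwise). The fraction field of $R$ is $k(X)(T)$, with graded fraction field $k(X)[T,T^{-1}]$. The first step is to identify the homogeneous height-one primes of $R$. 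Since $\dim X\ge 1$, the irrelevant ideal $\frakm=R_+$ has height $\ge 2$, so a homogeneous height-one prime cannot contain $\frakm$ and therefore corresponds to a codimension-one point of $\Proj R=X$, i.e. to a prime divisor $W$. I would make this correspondence valuative: the identity $\operatorname{ord}_W(f)+\lf jc_W\rf\ge 0\iff\operatorname{ord}_W(f)+jc_W\ge 0$ (valid because $\operatorname{ord}_W(f)\in\ZZ$) shows that the height-one prime $\frakp_W$ is cut out by the normalized valuation $\bar\nu_W(fT^j)=q_W(\operatorname{ord}_W(f)+jc_W)$ on $k(X)(T)$. By the standard theory of graded Krull domains, $\Cl(R)$ is the cokernel of $\div_R$ on homogeneous elements, with $\Div(R)=\oplus_W\ZZ\,\frakp_W$.

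Next I would introduce the comparison map. Define a homomorphism $\pi^*\colon\Div(X)\to\Div(R)$ on prime divisors by $\pi^*(W)=q_W\frakp_W$. For $f\in k(X)^*$, computing $\bar\nu_W$ degree by degree gives
\[
\div_R(fT^j)=\pi^*(\div_X f)+j\,\div_R(T),\qquad \div_R(T)=\textstyle\sum_i p_i\,\frakp_{V_i}.
\]
In particular $\pi^*$ carries principal divisors to principal divisors, so it descends to the natural map $\pi^*\colon\Cl(X)\to\Cl(R)$ appearing in the asserted sequence, and the displayed formula shows $\operatorname{Prin}(R)=\pi^*(\operatorname{Prin}(X))+\ZZ\,\div_R(T)$. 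The whole problem then reduces to computing the kernel and cokernel of this map.

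For the cokernel, since $\operatorname{image}(\pi^*\colon\Div X\to\Div R)=\oplus_W q_W\ZZ\,\frakp_W$, I obtain
\[
\coker(\pi^*)=\Div(R)\big/\big(\textstyle\sum_W q_W\ZZ\,\frakp_W+\ZZ\,\div_R(T)\big)=\big(\oplus_i\ZZ/q_i\ZZ\big)\big/\ZZ\cdot(p_i\bmod q_i)_i\,,
\]
which is exactly $\coker\alpha$. For the kernel, $\pi^*[\delta]=0$ means $\pi^*\delta=\pi^*(\div_X f)+m\,\div_R(T)$ for some $f\in k(X)^*$ and $m\in\ZZ$; since $\pi^*$ is injective on $\Div(X)$ and $\gcd(p_i,q_i)=1$, the divisor $m\,\div_R(T)$ lies in $\operatorname{image}(\pi^*)$ precisely when $q\mid m$, in which case, writing $m=qm'$ and noting $\pi^*(qD)=q\,\div_R(T)$, one gets $\delta\equiv m'\,qD$ modulo $\operatorname{Prin}(X)$. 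Hence $\ker(\pi^*)=\ZZ\cdot[qD]$ with $qD=\lcm(q_i)\,D$, which gives the map $\theta$.

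Finally I would check that $\theta$ is injective, i.e. that $[qD]$ has infinite order: if $n\,qD$ were principal for some $n>0$ then $\calO_X(nqD)\cong\calO_X$, contradicting that $nqD$ is ample and $\dim X\ge 1$. Assembling these computations yields the exact sequence. I expect the main obstacle to be the first step: rigorously matching the homogeneous height-one primes of $R$ with the prime divisors of $X$ and pinning down the ramification factor $q_W$, which rests on the graded Krull domain machinery together with the floor-function identity above. Once the valuations $\bar\nu_W$ are correctly normalized, the kernel and cokernel computations are formal.
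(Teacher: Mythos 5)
Your proposal is correct. Note that the paper itself gives no proof of this statement---it is quoted from Watanabe \cite[Theorem~1.6]{Watanabe}, with only Remark~\ref{rem:KWproof} recording the key ingredient---but your argument is essentially Watanabe's: under the identification $\frakp_{V_i}\leftrightarrow\frac{1}{q_i}V_i$ and $\frakp_W\leftrightarrow W$, your group $\oplus_W\ZZ\,\frakp_W$ is the paper's $\Div(X,D)$, and your cokernel computation is exactly the surjection $\Div(X,D)/\Div(X)\to\Cl(R)/\image(\Cl(X))$ of that remark, made precise; the one hypothesis you use that is only implicit in the statement (ampleness of $D$, needed both for the graded fraction field to be $k(X)[T,T^{-1}]$ and for the injectivity of $\theta$) is indeed part of the setup preceding the theorem.
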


In the exact sequence above, $\coker\alpha$ is always a finite group. Moreover, if $X$ is projective space, a Grassmannian variety, or a smooth complete intersection in $\PP^n$ of dimension at least three, then $\Cl(X)=\ZZ$. It follows that, in these cases, the divisor class group of $R(X,D)$ is finite for any ample $\QQ$-divisor $D$ on $X$, and hence that $R(X,D)$ is \emph{almost factorial} in the sense of Storch \cite{Storch}.

Lipman proved that the divisor class group of a two-dimensional normal local ring $R$ with rational singularities is finite, \cite[Theorem~17.4]{Lipman_IHES}. While this is a hard result, the analogous statement for graded rings is a straightforward application of Theorem~\ref{exactseq}. Indeed, let $R$ be an $\NN$-graded normal ring of dimension two, finitely generated over an algebraically closed field $R_0$, such that $R$ has rational singularities. Then $R$ has a negative $a$-invariant by \cite[Theorem~3.3]{Watanabe}, so $H^1(X,\calO_X)=0$ where $X=\Proj R$. But then $X$ is a curve of genus $0$ so it must be $\PP^1$, and it follows that the divisor class group of $R$ is finite.

\begin{remark}\label{rem:KWproof}
We note some aspects of Watanabe's proof of Theorem~\ref{exactseq}. Let $\Div(X)$ be the group of Weil divisors on $X$, and let
\[
\Div(X,\QQ)=\Div(X)\otimes_\ZZ\QQ
\]
be the group of $\QQ$-divisors. For $D$ as in Theorem~\ref{exactseq}, set $\Div(X,D)$ to be the subgroup of $\Div(X,\QQ)$ generated by $\Div(X)$ and the divisors
\[
\frac{1}{q_1}V_1,\ \dots,\ \frac{1}{q_r}V_r\,.
\]
Each element $E\in\Div(X,D)$ gives a divisorial ideal
\[
\oplus_{j\ge0}H^0(X,\calO_X(E+jD))T^j
\]
of $R$, and hence an element of $\Cl(R)$. The map $\Div(X,D)\to\Cl(R)$ induces
a surjective homomorphism
\[
\Div(X,D)/\Div(X)\to \Cl(R)/\image(\Cl(X))\,.
\]
\end{remark}

\section{Computing divisor class groups}

The divisor class groups of affine surfaces of characteristic $p$ defined by equations of the form $z^{p^n}=g(x,y)$ have been studied in considerable detail; such surfaces are sometimes called \emph{Zariski surfaces}. In \cite{LangTams} Lang computed the divisor class group of hypersurfaces of the form $z^{p^n}=g(x_1,\dots,x_d)$ where $g$ is a homogeneous polynomial of degree relatively prime to $p$. The proposition below recovers \cite[Proposition~3.11]{LangTams}.

Let $A=k[x_1,\dots,x_d]$ be a polynomial ring over a field. We say $g\in A$ is a \emph{weighted homogeneous polynomial} if there exists an $\NN$-grading on $A$, with $A_0=k$, for which $g$ is a homogeneous element.

\begin{proposition}\label{prop:main}
Let $R=k[z,x_1,\dots,x_d]/(z^n-g)$ be a normal hypersurface over a field $k$, where $g\in k[x_1,\dots,x_d]$ is a weighted homogeneous polynomial with degree relatively prime to $n$. Let $g=h_1\cdots h_r$, where $h_i\in k[x_1,\dots,x_d]$ are irreducible polynomials. Then
\[
\Cl(R)=(\ZZ/n\ZZ)^{r-1}\,,
\]
and the images of $(z,h_1),\dots,(z,h_{r-1})$ form a minimal generating set for $\Cl(R)$.
\end{proposition}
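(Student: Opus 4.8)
The plan is to exhibit $R$ as a generalized section ring and apply Theorem~\ref{exactseq}. First I would grade $R$: if $g$ is homogeneous of degree $a$ under the weighting $\deg x_i = w_i$ (so that $\gcd(a,n)=1$), I rescale to $\deg x_i = nw_i$ and put $\deg z = a$, which makes $z^n-g$ homogeneous and $R$ a normal $\NN$-graded domain finitely generated over $k=R_0$. The geometric input, as stressed in the introduction, is that $X=\Proj R$ is a weighted projective space, so $\Cl(X)=\ZZ$; I will use that this group is generated by the classes of the twisting sheaves $\calO_X(m)$. Since $\gcd(a,nw_1,\dots,nw_d)=1$ there is a degree-$1$ element $T$ in $\operatorname{Frac}(R)$, so Theorem~\ref{thm:Demazure} produces the ample $\QQ$-divisor $D$ with $R=\oplus_{j\ge0}H^0(X,\calO_X(jD))T^j$, which I then feed into Theorem~\ref{exactseq}.

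Computing $D$ is the technical heart. Write $P_i=(z,h_i)$ and let $V_i\subset X$ be the associated prime divisor. Because $g=h_1\cdots h_r$ is squarefree, the covering $X\to\Proj k[x_1,\dots,x_d]$ is totally ramified along each $V_i$: in the divisorial valuation one has $v_{P_i}(z)=1$, while $v_{P_i}$ restricts to $n\cdot v_{V_i}$ on $k(X)$. Running Demazure's recipe for the coefficient of $D$ at $V_i$ gives $D=\sum_{i=1}^r (p_i/n)V_i$ with $p_i=v_{P_i}(T)$, so every denominator is $q_i=n$. Moreover, for homogeneous $\psi$ of degree $m$ the element $\psi/T^m$ lies in $k(X)$, forcing $v_{P_i}(\psi)\equiv m\,p_i \pmod n$; applied to $\psi=z$ this reads $a\,p_i\equiv 1\pmod n$, so $p_i\equiv a^{-1}\pmod n$ for every $i$. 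In particular $\gcd(p_i,n)=1$ and all the $p_i$ share the same residue.

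Now Theorem~\ref{exactseq} applies with every $q_i=n$. The map $\alpha\colon\ZZ\to\oplus_{i=1}^r\ZZ/n\ZZ$ sends $1\mapsto(a^{-1},\dots,a^{-1})$; as $a^{-1}$ is a unit its image is the diagonal subgroup, so $\coker\alpha\cong(\ZZ/n\ZZ)^r/\langle(1,\dots,1)\rangle\cong(\ZZ/n\ZZ)^{r-1}$. Under the correspondence of Remark~\ref{rem:KWproof} the fractional divisor $\tfrac1n V_i$ maps to the class of $P_i=(z,h_i)$, and the identities $\div_R(h_i)=nP_i$ and $\div_R(z)=\sum_i P_i$ (both valid since $h_i,z\in R$) give the relations $n[P_i]=0$ and $\sum_i[P_i]=0$ in $\Cl(R)$. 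Hence $[P_1],\dots,[P_{r-1}]$ generate a subgroup that surjects onto $\coker\alpha\cong(\ZZ/n\ZZ)^{r-1}$.

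The main obstacle is to rule out any further contribution from $\Cl(X)$, i.e.\ to show that $\theta$ is surjective, equivalently that the image of $\Cl(X)\to\Cl(R)$ is contained in $\langle[P_1],\dots,[P_r]\rangle$. I would obtain this from Remark~\ref{rem:KWproof}: the $\QQ$-divisor $mD$ corresponds to the free module $R(m)\cong R$ and hence to $0\in\Cl(R)$, so the integral divisor $\lfloor mD\rfloor=mD-\{mD\}$ maps to the image of $-\{mD\}=-\sum_i(mp_i\bmod n)\tfrac1n V_i$, which lies in $\langle[P_i]\rangle$. Since the classes $[\calO_X(m)]=[\lfloor mD\rfloor]$ generate $\Cl(X)$, the whole image of $\Cl(X)$ lies in $\langle[P_i]\rangle$; comparing orders through the exact sequence then forces $\theta$ to be surjective and yields $\Cl(R)=\langle[P_1],\dots,[P_{r-1}]\rangle\cong(\ZZ/n\ZZ)^{r-1}$ with the asserted minimal generating set. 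The points I expect to demand the most care are the precise identification of $X$ as weighted projective space (hence $\Cl(X)=\ZZ$) and the Demazure coefficient computation yielding $q_i=n$ and $p_i\equiv a^{-1}$.
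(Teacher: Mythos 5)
Your proposal follows the same overall architecture as the paper --- regrade $R$, realize it as a generalized section ring via Theorem~\ref{thm:Demazure}, feed the resulting $\QQ$-divisor into Theorem~\ref{exactseq}, and extract generators and relations via Remark~\ref{rem:KWproof} --- and your computation of the fractional part of $D$ along the $V_i=V(h_i)$ (denominator $n$, numerators all congruent to $(\deg g)^{-1}$ modulo $n$) agrees with the paper's. But you diverge at the two technical pinch points, and at both you rely on assertions where the paper computes. First, your determination of $D$ only addresses the divisors $V(h_i)$. The paper instead writes down an explicit candidate $D=b\div(\bx)+\frac{a}{n}\div(g)$ with $T=z^a\bx^b$, where $am+(\deg g$ wait, $am+bn=1$ and $\bx=x_1^{s_1}\cdots x_d^{s_d}$ with $\sum s_ic_i=1$, and verifies $R=\oplus_jH^0(X,\calO_X(jD))T^j$ degree by degree. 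The point of that verification is that it pins down $D$ everywhere, in particular showing there are no further fractional components along the coordinate divisors $V(x_i)$. For the standard grading this is automatic, but for a genuinely weighted $g$ the punctured cone over $X$ can be ramified along the $V(x_i)$ as well, and any non-integral coefficients there would enter $\alpha$ and alter $\coker\alpha$. Your valuation recipe, applied consistently, would have to confront exactly those components; as written you silently assume they are absent. This is the one place I would call a genuine gap: you need to either choose $T$ carefully and check integrality of $D$ at the $V(x_i)$, or carry those components through the cokernel computation and show they do not contribute.

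Second, your treatment of $\theta$ imports two unproved facts about weighted projective space ($\Cl(X)=\ZZ$, generated by the twisting sheaves $[\lf mD\rf]$) and then closes the argument with an order count. This can be made to work, but the paper gets the same conclusion for free by a trick your proposal is missing: apply Theorem~\ref{exactseq} a second time, to the integral divisor $nD$ and the Veronese subring $R^{(n)}=k[x_1,\dots,x_d]$. Since the polynomial ring is factorial, that sequence collapses to $0\to\ZZ\to\Cl(X)\to0$ with $1\mapsto nD$, so $nD$ generates $\Cl(X)$; as $\theta$ for the pair $(R,D)$ is the same map $1\mapsto nD$, it is an isomorphism outright and $\Cl(R)=\coker\alpha$ with no further argument. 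This Veronese observation replaces both of your external inputs and is the cleanest idea in the paper's proof. Finally, you assert that $-\frac1nV_t$ corresponds to the ideal $(z,h_t)$; the paper actually computes two graded components of that divisorial ideal to exhibit $z$ and $h_t$ as generators, and your relations $n[\frakp_t]=0$ and $\sum_i[\frakp_i]=0$ then match the paper's exactly.

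(One typographical note: in the first paragraph I wrote the paper's B\'ezout relation; it is $am+bn=1$ with $m=\deg g$, so the numerators you call $a^{-1}$ are the paper's $a$.)
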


Note that if $n\ge2$, then the hypothesis that $R$ is normal forces $h_1,\dots,h_r$ to be pairwise coprime irreducible polynomials. 

\begin{proof}[Proof of Proposition~\ref{prop:main}]
The polynomial ring $k[x_1,\dots,x_d]$ has a grading under which $\deg x_i=c_i$ for $c_i\in\NN$, and the degree of $g$ is an integer $m$ relatively prime to $n$. We assume, without any loss of generality, that $\gcd(c_1,\dots, c_d)=1$. Consider the $\NN$-grading on $R$ where $\deg x_i=n c_i$ and $\deg z=m$. Note that under this grading $\deg g=\sum\deg h_i=mn$. The $n$-th Veronese subring of $R$ is
\[
R^{(n)}=k[z^n,x_1,\dots,x_d]/(z^n-g)=k[x_1,\dots,x_d]\,,
\]
which is a polynomial ring in $x_1,\dots,x_d$. Let $X=\Proj R^{(n)}=\Proj R$.

There exist integers $s_i$, $a$, and $b$ such that $\sum_{i=1}^ds_i c_i=1$ and $am+bn=1$. Consider the $\QQ$-divisor on $X$ given by
\[
D=b\div(\bx)+\frac{a}{n}\div(g)=b\sum_{i=1}^ds_iV(x_i)+\frac{a}{n}\sum_{i=1}^rV(h_i)\,,
\]
where $\bx=x_1^{s_1}\cdots x_d^{s_d}$. We claim that 
\begin{equation}\label{correct_d}
R=\oplus_{j\ge0}H^0(X,\calO_X(jD))T^j\,,
\end{equation}
where $T=z^a\bx^b$ is a homogeneous degree $1$ element of the fraction field of $R$. First note that $\lf am/n\rf = \lf(1-bn)/n\rf =-b$, so
\[
\lf mD\rf=bm\div(\bx)+\left\lf\frac{am}{n}\right\rf\div(g)=bm\div(\bx)-b\div(g)\,.
\]
Consequently $\deg\lf mD\rf=0$, and $H^0(X,\calO_X(mD))T^m$ is the $k$-vector space spanned by the element
\[
\bx^{-bm}g^bT^m=\bx^{-bm}(z^n)^b(z^a\bx^b)^m=z^{bn+am}=z\,.
\]
Let $c=c_t$ for an integer $1\le t\le d$. Then $ncD=bnc\div(\bx)+ac\div(g)$ has degree $nc$, and $H^0(X,\calO_X(ncD))T^{nc}$ contains the element
\[
x_t\bx^{-bnc}g^{-ac}T^{nc}=x_t\bx^{-bnc}(z^n)^{-ac}(z^a\bx^b)^{nc}=x_t\,.
\]
To prove the claim \eqref{correct_d}, it remains to verify that $z,x_1,\dots,x_d$ are $k$-algebra generators for the ring $\oplus_{j\ge0}H^0(X,\calO_X(jD))T^j$. An arbitrary positive integer $j$ can be written as $um+vn$ for $0\le u\le n-1$. We then have
\begin{align*}
\lf jD\rf&=b(um+vn)\div(\bx)+\left\lf\frac{a(um+vn)}{n}\right\rf\div(g)\\
&=b(um+vn)\div(\bx)+(va-ub)\div(g)\,,
\end{align*}
which has degree $vn$. Consequently $H^0(X,\calO_X(jD))T^j$ vanishes if $v$ is negative, and for nonnegative $v$, it is spanned by elements
\[
\mu\bx^{-b(um+vn)}g^{-va+ub} T^{um+vn}=\mu z^u\,,
\]
for monomials $\mu$ in $x_i$ of degree $v$. This completes the proof of \eqref{correct_d}.

Since $nD$ has integer coefficients, the exact sequence of Theorem~\ref{exactseq} for the divisor $nD$ and corresponding ring $R^{(n)}$ reduces to
\[\CD
0@>>>\ZZ@>{\theta}>>\Cl(X)@>>>\Cl(R^{(n)})@>>>0\,,
\endCD\]
where $\theta(1)=nD$. Since $R^{(n)}$ is a polynomial ring, and hence factorial, it follows that $nD$ generates $\Cl(X)$. Next, consider the exact sequence applied to the divisor $D$ and corresponding ring $R$, i.e., the sequence
\[\CD
0@>>>\ZZ@>{\theta}>>\Cl(X)@>>>\Cl(R)@>>>\coker\alpha@>>>0\,.
\endCD\]
The lcm of the denominators occurring in $D$ is $n$, so we once again have $\theta(1)=nD$. Consequently $\theta$ is an isomorphism and $\Cl(R)=\coker\alpha$, where
\[
\alpha\colon\ZZ\to\bigoplus_1^r\ZZ/n\ZZ\qquad\text{ with }\qquad\alpha(1)=(a,\dots,a)\,.
\]
Since $a$ and $n$ are relatively prime, it follows that
\[
\Cl(R)=(\ZZ/n\ZZ)^{r-1}\,.
\]

We next determine explicit generators for $\Cl(R)$ by Remark~\ref{rem:KWproof}. The $\QQ$-divisors
\[
E_t=-\frac{1}{n}V(h_t)\qquad\text{ for }1\le t\le r
\]
give a generating set for $\Div(X,D)/\Div(X)$ which surjects onto $\Cl(R)$. Hence the divisorial ideals
\[
\frakp_t=\oplus_{j\ge0}H^0(X,\calO_X(E_t+jD))T^j\qquad\text{ where }1\le t\le d\,,
\]
generate $\Cl(R)$. The computation of $\frakp_t$ is straightforward, and we give a brief sketch. First note that 
\begin{align*}
\lf E_t+mD\rf&=bm\div(x)+\left\lf\frac{am-1}{n}\right\rf V(h_t)+\sum_{i\neq t}\left\lf\frac{am}{n}\right\rf V(h_i)\\
&=bm\div(\bx)-b\div(g)\,,
\end{align*}
so $H^0(X,\calO_X(E_t+mD))T^m$ is the $k$-vector space spanned by
\[
\bx^{-bm}g^bT^m=z.
\]
Since the degree of each $x_i$ is a multiple of $n$, we have $\deg h_t=n\gamma$ for some integer $\gamma$. We next compute the component of $\frakp_t$ in degree $n\gamma$. Note that
\[
\lf E_t+n\gamma D\rf=-V(h_t)+bn\gamma\div(\bx)+a\gamma\div(g)\,,
\]
so $H^0(X,\calO_X(E_t+n\gamma D))T^{n\gamma}$ is the $k$-vector space spanned by
\[
h_t\bx^{-bn\gamma}g^{-a\gamma}T^{n\gamma}=h_t.
\]
It is now a routine verification that $z,h_t$ are generators for the ideal $\frakp_t$, which, we note, is a height one prime of $R$. Consequently $\Cl(R)$ is generated by $\frakp_1,\dots,\frakp_r$. Using $\sim$ to denote linear equivalence, we have
\[
nE_t+n\gamma D\sim 0\qquad\text{ and }\qquad\sum_{i=1}^r E_i+mD\sim 0\,,
\]
implying that $n[\frakp_t]=0$ and $\sum_i[\frakp_i]=0$ in $\Cl(R)$. These correspond to the calculations with divisorial ideals,
\[
\frakp_t^{(n)}=h_tR\qquad\text{ and }\qquad\bigcap_{i=1}^r\frakp_i=zR\,,
\]
and imply, in particular, that $[\frakp_1],\dots,[\frakp_{r-1}]$ is a generating set for $\Cl(R)$.
\end{proof}

\begin{example}
We use Proposition~\ref{prop:main} to compute the divisor class group of diagonal hypersurfaces
\[
R=k[z,x_1,\dots,x_d]/(z^n-x_1^{m_1}-\cdots-x_d^{m_d})
\]
where $n$ is relatively prime to $m_i$ for $1\le i\le d$, and $k$ is a field of characteristic zero, or of characteristic not dividing each $m_i$.

By the Jacobian criterion, $R$ has an isolated singularity at the homogeneous maximal ideal $\frakm$. Hence if $d\ge 4$, then $R$, as well as its $\frakm$-adic completion $\widehat{R}$, are factorial by Grothendieck's parafactoriality theorem \cite{Grothendieck}; see \cite{Call_Lyubeznik} for a simple proof of Grothendieck's theorem.

\medskip

\emph{Case $d=3$.} The polynomial $g=x_1^{m_1}+x_2^{m_2}+x_3^{m_3}$ is irreducible since $k[x_1,x_2,x_3]/(g)$ is a normal domain by the Jacobian criterion. We set $\deg x_i$ to be $m_1m_2m_3/m_i$. Then $g$ is a weighted homogeneous polynomial of degree $m_1m_2m_3$, which is relatively prime to $n$, so Proposition~\ref{prop:main} implies that $R$ is factorial. Since $R$ satisfies the Serre conditions $(R_2)$ and $(S_3)$, the completion $\widehat{R}$ is factorial as well by \cite[Korollar~1.5]{Flenner}. The divisor class groups of rational three-dimensional Brieskorn singularities are computed in \cite[Chapter~IV]{BS}; see also \cite{Storch_Crelle}.

\medskip

\emph{Case $d=2$.} Let $g=x_1^{m_1}+x_2^{m_2}$. If $c=\gcd(m_1,m_2)$, let $m_1=ac$ and $m_2=bc$, and set $\deg x_1=b$ and $\deg x_2=a$. Let $f$ be an irreducible factor of $g$. Then $f$ is homogeneous, and hence has the form $\sum a_{ij}x_1^ix_2^j$ where $a_{ij}\in k$ and $bi+cj=\deg f$ for each term occurring in the summation. Since $x_1$ and $x_2$ do not divide $g$, we see that $f$ must contain nonzero terms of the form $a_{0j}x_2^j$ and $a_{i0}x_1^i$. Hence $\deg f$ is a multiple of $ab$, and it follows that $f$ is a polynomial in $x_1^a$ and $x_2^b$. Consequently the number of factors of $g$ in $k[x_1,x_2]$ is the number of factors of $s^c+t^c$ in $k[s,t]$ or, equivalently, the number of factors of $1+t^c$ in $k[t]$.

In particular, if $m_1$ and $m_2$ are relatively prime, then $g$ is irreducible and Proposition~\ref{prop:main} implies that $R$ is factorial. As is well-known, $\widehat{R}$ need not be factorial; see for example, \cite[Theorem~III.5.2]{Samuel}.

If $k$ is algebraically closed, then $g$ is a product of $c$ irreducible factors, and so Proposition~\ref{prop:main} implies that
\[
\Cl(R)=(\ZZ/n\ZZ)^{c-1}\,.
\]
\end{example}

\begin{remark}
The condition that the degree of $g$ is relatively prime to $n$ is certainly crucial in Proposition~\ref{prop:main}. In the absence of this, $\Cl(R)$ need not be finite, for example $\CC[z,x_1,x_2,x_3]/(z^3-x_1^3-x_2^3-x_3^3)$ has divisor class group $\ZZ^6$. However, one can drop the relatively prime condition when considering hypersurfaces of the form $z^n-x_0g(x_1,\dots,x_d)$, see also \cite[Proposition~3.12]{LangTams}:
\end{remark}

\begin{corollary}\label{cor:x0}
Let $R=k[z,x_0,\dots,x_d]/(z^n-x_0g)$ be a normal hypersurface over a field $k$, where $g$ is a weighted homogeneous polynomial in $x_1,\dots,x_d$. Let $g=h_1\cdots h_r$, where $h_i\in k[x_1,\dots,x_d]$ are irreducible. Then
\[
\Cl(R)=(\ZZ/n\ZZ)^r\,,
\]
and the images of $(z,h_1),\dots,(z,h_r)$ form a minimal generating set for $\Cl(R)$.
\end{corollary}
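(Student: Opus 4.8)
The plan is to deduce Corollary~\ref{cor:x0} directly from Proposition~\ref{prop:main} by absorbing the factor $x_0$ into the defining polynomial and choosing the weight of $x_0$ so that the coprimality hypothesis is restored. The point is that
\[
x_0 g = x_0 h_1 \cdots h_r
\]
is a product of $r+1$ irreducible polynomials in $k[x_0, x_1, \dots, x_d]$: each $h_i$ is irreducible by hypothesis, while $x_0$ is irreducible and coprime to every $h_i$, since the $h_i$ involve only $x_1, \dots, x_d$. Thus if the hypotheses of Proposition~\ref{prop:main} can be arranged for the polynomial $x_0 g$ in the variables $x_0, \dots, x_d$, then that proposition yields
\[
\Cl(R) = (\ZZ/n\ZZ)^{(r+1)-1} = (\ZZ/n\ZZ)^r
\]
at once.

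To arrange the hypotheses, first fix an $\NN$-grading on $k[x_1, \dots, x_d]$ under which $g$ is homogeneous, say of degree $e$, and extend it to $k[x_0, x_1, \dots, x_d]$ by assigning the new variable an arbitrary weight $\deg x_0 = c_0 \in \NN$. Then $x_0 g$ is weighted homogeneous of degree $c_0 + e$. This is the only place the missing coprimality hypothesis on $\deg g$ matters: because $c_0$ is at our disposal, I can choose it---for instance, a positive integer with $c_0 + e \equiv 1 \pmod n$---so that $\gcd(c_0 + e, n) = 1$. With such a weight, $x_0 g$ satisfies every hypothesis of Proposition~\ref{prop:main}.

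Applying Proposition~\ref{prop:main} to $R$ with the $r+1$ irreducible factors above then gives $\Cl(R) = (\ZZ/n\ZZ)^r$. For the generators, I would list the factors of $x_0 g$ as $h_1, \dots, h_r, x_0$ and appeal to the final assertion of Proposition~\ref{prop:main}, whereby omitting the last factor leaves a minimal generating set; here this is precisely the images of $(z, h_1), \dots, (z, h_r)$, as claimed. I do not expect a genuine obstacle: all of the content lies in the observation that the free weight of $x_0$ can be tuned to recover the degree condition. The remaining verifications are routine---that $x_0 g$ is a product of $r+1$ pairwise coprime irreducibles (automatic when $n \ge 2$ by the remark following Proposition~\ref{prop:main}, and vacuous when $n = 1$, in which case $R$ is a polynomial ring and both sides are trivial), and that $R$ is normal, which is assumed.
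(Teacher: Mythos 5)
Your proposal is correct and matches the paper's own argument: the published proof is exactly the one-line observation that the weight of $x_0$ can be chosen so that $\deg(x_0g)$ is relatively prime to $n$, after which Proposition~\ref{prop:main} applies to the $r+1$ irreducible factors $h_1,\dots,h_r,x_0$. Your additional remarks (ordering $x_0$ last to read off the generators, pairwise coprimality from normality) are the routine details the paper leaves implicit.
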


\begin{proof}
We may choose the degree of $x_0$ such that $\deg(x_0g)$ is relatively prime to $n$. The result then follows from Proposition~\ref{prop:main}.
\end{proof}

We conclude with the following example.

\begin{example}
Let $k$ be a field. Corollary~\ref{cor:x0} implies that the divisor class group of the ring $R=k[xy,x^n,y^n]$ is $\ZZ/n\ZZ$, since $R$ is isomorphic to the hypersurface 
\[
k[z,x_0,x_1]/(z^n-x_0x_1)\,.
\]
In \cite[Chapter~III]{Samuel}, the divisor class group of $R$ is computed by Galois descent if $n$ is relatively prime to the characteristic of $k$, and by using derivations if $n$ equals the characteristic of $k$.
\end{example}


\end{document}